\documentclass[twoside,bezier,12pt, reqno]{amsart}
\usepackage{amsmath, amsthm, amscd, amsfonts,url, amssymb,graphicx,graphics, color}
\usepackage[bookmarksnumbered, plainpages]{hyperref}
\textheight =20cm
\textwidth =14cm
\linespread {1.1}
\setlength{\oddsidemargin}{0.35in}
\setlength{\evensidemargin}{0.35in}
\setlength{\topmargin}{2cm}
\usepackage{pgf,tikz}
\usetikzlibrary{arrows}
\newtheorem{theorem}{Theorem}[section]

\newtheorem{proposition}[theorem]{Proposition}

\theoremstyle{definition}

\theoremstyle{remark}

\numberwithin{equation}{section}
\setcounter{page}{1}

\begin{document}

\title{A note on  the automorphism groups of  Johnson graphs}
\author{S. Morteza Mirafzal}

\address{ Department of Mathematics, Lorestan University, Khoramabad, Iran}
\email{smortezamirafzal@yahoo.com}
\email{mirafzal.m@lu.ac.ir}

\begin{flushleft}
\end{flushleft} 
\begin{abstract} The Johnson graph $J(n, i)$ is defined as the graph whose vertex set is the set
of all $i$-element subsets of $\{1, . . . , n \}$, and two vertices are adjacent whenever the
cardinality of their intersection is equal to $i$-1. In Ramras and Donovan [SIAM
J. Discrete Math, 25(1): 267-270, 2011], it is proved that  if $ n \neq 2i$,  then the
  automorphism group of $J(n, i)$
is isomorphic with the group $Sym(n)$ and it is conjectured
 that if $n = 2i$, then the automorphism group of $J(n, i)$ is isomorphic with the group
 $ Sym(n) \times \mathbb{Z}_2$. In this paper,  we will find these results by different methods.
  We will prove the conjecture in the affirmative.   \
  
 \

 Keywords : Johnson graph, Line graph, Automorphism group\

AMS subject classifications. 05C25, 05C69, 94C15
\end{abstract}
\maketitle

\section{Introduction} 
Johnson graphs arise from the association schemes
with the same name. They are defined as follows.\

Given $n,m \in \mathbb{N}$ with $ m \leqq n-1 $,
the Johnson graph $J(n,m)$ is defined by: \

(1) The vertex set is the set of all subsets of $I = \{ 1,2, ..., n\} $ with cardinality exactly $m$.\

(2) Two vertices are adjacent if and only if the cardinality of their intersection is
equal to $m-1$.\

The Johnson graph $J(n,m)$ is a vertex transitive graph [7].
It follows from the definition that  for $m = 1$, the Johsnon graph $J(n, 1)$ is the complete graph
$K_n$. For $m = 2$ the Johnson graph $J(n, 2)$ is the line graph of the complete graph on $n$ vertices,
also known as the triangular graph $T(n)$. Thus, for instance, $J(5, 2)$ is the complement of the
Petersen graph, displayed in Figure 1, and  in general, $J(n, 2)$ is the complement of the Kneser
graph $K(n, 2)$. \

\definecolor{qqqqff}{rgb}{0.,0.,0.8.}
\begin{tikzpicture}[line cap=round,line join=round,>=triangle 45,x=1.0cm,y=1.0cm]
\clip(-2.3,-0.9) rectangle (7.32,6.3);
\draw (1.84,4.12)-- (1.44,3.18);
\draw (1.44,3.18)-- (1.96,1.84);
\draw (1.84,4.12)-- (2.76,5.);
\draw (2.76,5.)-- (4.48,4.98);
\draw (4.48,4.98)-- (5.7,3.94);
\draw (5.7,3.94)-- (5.8,2.88);
\draw (5.8,2.88)-- (5.4,1.76);
\draw (5.4,1.76)-- (4.56,1.02);
\draw (4.56,1.02)-- (2.9,0.94);
\draw (2.9,0.94)-- (1.96,1.84);
\draw (1.44,3.18)-- (4.48,4.98);
\draw (1.44,3.18)-- (5.7,3.94);
\draw (1.44,3.18)-- (5.4,1.76);
\draw (1.44,3.18)-- (4.56,1.02);
\draw (1.84,4.12)-- (1.96,1.84);
\draw (1.84,4.12)-- (5.7,3.94);
\draw (1.84,4.12)-- (4.48,4.98);
\draw (1.84,4.12)-- (2.9,0.94);
\draw (2.76,5.)-- (1.96,1.84);
\draw (2.76,5.)-- (2.9,0.94);
\draw (2.76,5.)-- (5.4,1.76);
\draw (2.76,5.)-- (5.8,2.88);
\draw (4.48,4.98)-- (5.4,1.76);
\draw (4.48,4.98)-- (5.8,2.88);
\draw (5.7,3.94)-- (4.56,1.02);
\draw (5.7,3.94)-- (2.9,0.94);
\draw (5.8,2.88)-- (2.9,0.94);
\draw (5.8,2.88)-- (4.56,1.02);
\draw (4.56,1.02)-- (1.96,1.84);
\draw (0.74,0.24) node[anchor=north west] {Figure1. The Johnson graph J(5,2)};
\begin{scriptsize}
\draw [fill=qqqqff] (1.44,3.18) circle (1.5pt);
\draw[color=qqqqff] (1.08,3.64) node {$24$};
\draw [fill=qqqqff] (1.84,4.12) circle (1.5pt);
\draw[color=qqqqff] (1.42,4.66) node {$23$};
\draw [fill=qqqqff] (2.76,5.) circle (1.5pt);
\draw[color=qqqqff] (2.7,5.56) node {$13$};
\draw [fill=qqqqff] (4.48,4.98) circle (1.5pt);
\draw[color=qqqqff] (4.82,5.46) node {$12$};
\draw [fill=qqqqff] (5.8,2.88) circle (1.5pt);
\draw[color=qqqqff] (6.16,3.16) node {$51$};
\draw [fill=qqqqff] (5.7,3.94) circle (1.5pt);
\draw[color=qqqqff] (6.18,4.26) node {$52$};
\draw [fill=qqqqff] (5.4,1.76) circle (1.5pt);
\draw[color=qqqqff] (5.92,1.92) node {$41$};
\draw [fill=qqqqff] (4.56,1.02) circle (1.5pt);
\draw[color=qqqqff] (5.12,0.86) node {$45$};
\draw [fill=qqqqff] (2.9,0.94) circle (1.5pt);
\draw[color=qqqqff] (2.66,0.76) node {$53$};
\draw [fill=qqqqff] (1.96,1.84) circle (1.5pt);
\draw[color=qqqqff] (1.52,2.1) node {$34$};
\end{scriptsize}
\end{tikzpicture} \

We know that complementation of subsets $ M \longmapsto  M^c$ induces an isomorphism $J(n,m) \cong    J(n, n-m
)$, hence we may assume
without loss of generality that  $m \leq \frac{n}{2}  $.
This graph has been studied by various authors and some of the recent papers are [1,4,6,8,9,14].
In this paper we determine the automorphism group  $Aut(J(n, m))$, for $6\leq n$ and $ m \leq \frac{n}{2}  $.
Actually, the automorphism group of $J(n, m)$ for both the $n = 2m$ and $n \neq 2m$ cases
was already determined in [8], but the proof given there uses heavy group-theoretic
machinery. The main result of [14] was to provide a proof for the   $n \neq 2m$ case that
uses only elementary group theory, the proof is based on an analysis of the clique
structure of the graph. In [14] the authors leave the $n = 2m$ case open but make a
conjecture for this case. Also in [6] the conjecture is  resolved  in the affirmative by providing
a proof that again uses only elementary group theory. We will again find these  results by different methods 
 which we belief are also elementary.\

\

\section{Preliminaries} 
In this paper, a graph $\Gamma=(V,E)$ is
considered as an undirected simple graph where $V=V(\Gamma)$ is the vertex-set
and $E=E(\Gamma)$ is the edge-set. For all the terminology and notation
not defined here, we follow $[3,7,14]$.

The graphs $\Gamma_1 = (V_1,E_1)$ and $\Gamma_2 =
(V_2,E_2)$ are called isomorphic, if there is a bijection $\alpha
: V_1 \longrightarrow V_2 $   such that  $\{a,b\} \in E_1$ if and
only if $\{\alpha(a),\alpha(b)\} \in E_2$ for all $a,b \in V_1$.
in such a case the bijection $\alpha$ is called an isomorphism.
An automorphism of a graph $\Gamma $ is an isomorphism of $\Gamma
$ with itself. The set of automorphisms of $\Gamma$  with the
operation of composition of functions is a group, called the
automorphism group of $\Gamma$ and denoted by $ Aut(\Gamma)$.
The
group of all permutations of a set $V$ is denoted by $Sym(V)$  or
just $Sym(n)$ when $|V| =n $. A permutation group $G$ on
$V$ is a subgroup of $Sym(V)$. In this case we say that $G$ act
on $V$. If $\Gamma$ is a graph with vertex-set $V$, then we can view
each automorphism as a permutation of $V$, and so $Aut(\Gamma)$ is a
permutation group. Let $G$ acts on $V$, we say that $G$ is
transitive (or $G$ acts transitively on $V$), if there is just
one orbit. This means that given any two elements $u$ and $v$ of
$V$, there is an element $ \beta $ of  $G$ such that  $\beta (u)= v
$.

The graph $\Gamma$ is called vertex transitive, if  $Aut(\Gamma)$
acts transitively on $V(\Gamma)$. The action of $Aut(\Gamma)$ on
$V(\Gamma)$ induces an action on $E(\Gamma)$, by the rule
$\beta\{x,y\}=\{\beta(x),\beta(y)\}$,  $\beta\in Aut(\Gamma)$, and
$\Gamma$ is called edge transitive if this action is
transitive. The graph $\Gamma$ is called symmetric, if  for all
vertices $u, v, x, y,$ of $\Gamma$ such that $u$ and $v$ are
adjacent, and $x$ and $y$ are adjacent, there is an automorphism
$\alpha$ such that $\alpha(u)=x$,   and $ \alpha(v)=y$. It is clear
that a symmetric graph is vertex transitive and edge transitive. The graph $\Gamma$ is called
 distance-transitive
if for all
vertices $u, v, x, y,$ of $\Gamma$ such that $ d(u,v) =d(x,y)$, there exists some $ g \in Aut(\Gamma)$ satisfying $g(u) = x $ and $ g(v) = y$. It is clear that a distance transitive graph is a symmetric graph. The Johnson graph $J(n,m)$ is an example
of a distance-transitive graph [3].
For $v\in V(\Gamma)$ and $G=Aut(\Gamma)$, the stabilizer subgroup
$G_v$ is the subgroup of $G$ containing of all automorphisms which
fix $v$. In the vertex transitive case all stabilizer subgroups
$G_v $ are conjugate in $G$, and consequently isomorphic, in this
case, the index of $G_v$ in $G$ is given by the equation,  $|G
: G_v| =\frac{|G|}{|G_v|} =|V(\Gamma)|
$. If each stabilizer $ G_v $ is the identity group, then no
element of $G$, except the identity,   fixes any vertex, and
we say that $G$ act semiregularly on $V$. We say that $G$ act
regularly on $V$ if and only if $G$ acts  transitively and
semiregularly on $ V$,
 and in this case we
have $|V| = |G|$.\

Although, 
In most situations  it is difficult  to determine the automorphism group
of a graph $\Gamma$ and how it acts on the vertex set of $\Gamma$,  there are various papers in the literature,   and some of the recent works  
appear in the references   [6,8,10,11,12,13,14,16].

\section{Main results}

Let $ \Gamma $ be a connected graph with diameter $ D $ and $ x$
be a vertex of $ \Gamma$. Let $ \Gamma_i= \Gamma_i(x) $ be the set of vertices of $ \Gamma$
at distance $i$ from $ x $. Thus $ \Gamma_0 = \{ x \}$,
and $ \Gamma_1 = N(x)$ is the set of vertices which are adjacent to the vertex $ x $. 
 Therefore,   $ V( \Gamma)$  is partitioned into the disjoint subsets $ \Gamma_0 (x),..., \Gamma_D (x)$.
Let $ v,w \in \Gamma $ and $ d(v,w)$ denotes the distance between the vertices $v$ and $w$
in the graph $ \Gamma$. It is an easy task to show that for any two vertices $v,w$ of  $J(n,m)$, $d(u,v) = k$ if and only if $|v \cap w|=m-k $ (when regarding $v,w$ as $m$-sets).

\begin{proposition}

Let $ \Gamma = J(n, m), \  n \geq 6, \  3 \leq m  \leq\frac{n}{2}  $. Let $ x \in V ( \Gamma ), \
 \Gamma_i = \Gamma_i (x) $ and $ v \in \Gamma_i $. Then we have; \
$$ \cap_{w \in \Gamma_{i-1} \cap N(v)} (N(w) \cap \Gamma_i) = \{ v \}$$

\end{proposition}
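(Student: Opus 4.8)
The plan is to reduce everything to elementary set theory, using the fact recalled above that two $m$-sets are at distance $k$ precisely when their intersection has $m-k$ elements. Fix the $m$-set $x$ and a vertex $v\in\Gamma_i$; then $|v\cap x|=m-i$, so $\{1,\dots,n\}$ splits into four blocks
$$A=v\cap x,\qquad B=v\setminus x,\qquad C=x\setminus v,\qquad D=\{1,\dots,n\}\setminus(x\cup v),$$
of sizes $m-i,\ i,\ i,\ n-m-i$ respectively, and $v=A\cup B$ while $x=A\cup C$. The relevant range here is $2\le i\le m$ (for $i=1$ the inner intersection is all of $\Gamma_1$), so I assume $i\ge 2$; this is essentially the only place the numerical hypotheses enter.

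First I would pin down the index family $\Gamma_{i-1}\cap N(v)$. A neighbour of $v$ has the form $w=(v\setminus\{b\})\cup\{e\}$ with $b\in v$, $e\notin v$, and a one-line count gives $|w\cap x|=(m-i)-[b\in x]+[e\in x]$; requiring $d(x,w)=i-1$, i.e.\ $|w\cap x|=m-i+1$, forces $b\notin x$ and $e\in x$. Hence
$$\Gamma_{i-1}\cap N(v)=\{\,w_{b,e}:=(v\setminus\{b\})\cup\{e\}\ :\ b\in B,\ e\in C\,\},$$
which is non-empty (it has $i^2$ distinct members), and $v$ plainly lies in the left-hand side of the claim since $v\in N(w_{b,e})\cap\Gamma_i$ for every $b,e$.

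Next, take any $u$ in the intersection: $|u\cap x|=m-i$, and $|u\cap w_{b,e}|=m-1$ for all $b\in B$, $e\in C$. As $w_{b,e}=A\cup(B\setminus\{b\})\cup\{e\}$ is a disjoint union missing $D$, expanding gives
$$|u\cap A|+|u\cap B|-[b\in u]+[e\in u]=m-1\qquad\text{for all }b\in B,\ e\in C.$$
Since $|B|,|C|\ge 2$, letting $b$ and $e$ vary shows $[b\in u]$ is constant on $B$ and $[e\in u]$ is constant on $C$; equivalently, each of $B$ and $C$ is either contained in $u$ or disjoint from $u$. This is the one genuine idea in the proof: it reduces everything to four cases, which I would then run using the displayed identity together with $|u\cap A|\le|A|=m-i$ and $|u|=m$. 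If $u$ misses both $B$ and $C$, the identity gives $|u\cap A|=m-1>m-i$, impossible. If $u$ misses $B$ but contains $C$, it gives $|u\cap A|=m-2$, forcing $i=2$ and $A\subseteq u$, whence $u\supseteq A\cup C=x$ and so $u=x\notin\Gamma_i$, impossible. If $u$ contains both $B$ and $C$, it gives $|u\cap A|=m-i-1$, whence $|u|\ge(m-i-1)+2i=m+i-1>m$, impossible. The surviving case is $B\subseteq u$, $C\cap u=\emptyset$, where the identity yields $|u\cap A|=m-i=|A|$, so $A\subseteq u$ and $u\supseteq A\cup B=v$; comparing cardinalities gives $u=v$, as required.

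There is no serious obstacle; the difficulty is purely organizational. The two steps to execute with care are the explicit description of $\Gamma_{i-1}\cap N(v)$ and the bookkeeping of block sizes through the four cases --- in particular one must remember that $u$ may meet the ``outside'' block $D$, which is precisely why it is the crude bound $|u|=m$, rather than an exact decomposition of $u$, that rules out the three bad cases.
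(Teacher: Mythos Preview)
Your argument is correct, and it takes a genuinely different tack from the paper's. The paper proceeds contrapositively: starting from any $u\in\Gamma_i$ with $u\neq v$ that happens to be adjacent to one particular $w_{rt}$, it classifies the possible shapes of $u$ (according to which coordinate of $w_{rt}$ is deleted and which is inserted) and in each sub-case exhibits an explicit second index $w_{pq}$ to which $u$ is not adjacent. Your proof instead exploits the universal quantifier directly: writing $|u\cap w_{b,e}|=|u\cap A|+|u\cap B|-[b\in u]+[e\in u]$ and varying $b\in B$, $e\in C$ (using $|B|=|C|=i\ge 2$) immediately forces each of $B,C$ to lie entirely inside or entirely outside $u$, collapsing the problem to four clean cardinality checks. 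What this buys you is a tidier case analysis with no explicit witness-hunting; what the paper's approach buys is a concrete non-neighbour in each situation, which is closer in spirit to a constructive argument. Both implicitly need $i\ge 2$ (the paper uses $w_{r(t\pm1)}$ and $w_{st}$ with $s\neq r$), and you are right to flag that the $i=1$ case of the displayed identity is vacuous in the intended application. One small expository point: in your third case the equation gives $|u\cap A|=m-i-1$, which for $i=m$ is already negative and hence impossible before you invoke the size bound; your inequality still happens to give the right conclusion, but it would be cleaner to note the non-negativity first.
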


\begin{proof}
A proof of this result   appeared in [14],  but  for the sake of completeness and since  we
need our proof in the sequel, we offer a proof which is slightly different from that.
 It is clear that $ v \in \cap_{w \in \Gamma_{i-1} \cap N(v)} (N(w) \cap \Gamma_i)$.
Let $ x = \{ x_1, ..., x_m \} $. We can assume that  $ v = \{x_1, ..., x_{m-i}, y_1, ..., y_i \} $ where $ I = \{ x_1, ..., x_m, y_1, ...,y_{n-m} \} = \{ 1, 2, ..., n \} $. If $ w \in \Gamma_{i-1} \cap N (v) $, then $ | w \cap v | = m-1 $ and $ | w \cap x | = m - i + 1 $ and thus\
 $$ w = w_{rt} = ( v-y_t ) \cup \{ x_r \} = \{ x_1 , ..., x_{m-i}, x_r,  y_1, ..., y_{t-1}, y_{t+1}, ..., y_i \} $$  \ 
 where $ m-i+1 \leq r \leq m , \  1 \leq t \leq i $.\

 We show that if $ u \in \Gamma_i $ and $ u \neq v $ and $ u $ is adjacent to some $ w_{rt} $, then there  is some $ w_{pq} $ such that $ u $ is not adjacent to $ w_{pq} $.

 If $ v \neq u \in \Gamma_i $ is adjacent to $ w_{rt} $,  then $ u $ has one of the following forms,  $$ (*) \ \ \   \ u_1 = \{x_1, ..., x_{m-i}, y_1, ..., y_{t-1}, y_{t+1}, ..., y_i, y_a \}, \ i < a \leq n-m $$
  $$ (**) \ \ \   \  u_2 = \{ x_1, ..., x_{j-1}, x_{j+1}, ..., x_{m-i}, x_r, y_1, ..., y_{t-1}, y_{t+1}, ..., y_i, y_b\}$$ where
   $b\in \{i+1, ..., n-m \} \cup \{t\}, \ 1 \leq j \leq m-i$. \

 In the case $ (*) $, $ u_1 $ is not adjacent to $ w_{r(t-1)} $(or $ w_{r(t+1)}) $, because   $ x_r , y_t \in w_{r(t-1)} $ but $ x_r, y_t \not\in u_1 $. \

 In the case $ (**),  $ if $ y_b = y_t $, say, $$ u_2 = \{ x_1, ..., x_{j-1}, x_{j+1}, ..., x_{m-i}, x_r, y_1, ..., y_{t-1}, y_t, y_{t+1}, ..., y_i \} $$ then $ u_2 $ is not adjacent to $ w_{st} (s \neq r) $ because $ x_j, x_s \in w_{st} $ and $ x_j, x_s \not \in u_2 $.
 Also in the case $ (**) $, if $ y_b \neq y_t $, then we have $$ u_2 = \{ x_1, ..., x_{j-1}, x_{j+1}, ..., x_{m-i}, x_r, y_1, ..., y_{t-1}, y_{t+1}, ..., y_i, y_b \} $$ where $ b \in \{ i+1, ..., n-m \} $. But in this case $ u_2 $ is not adjacent to  $ w_{s(t-1)} $   (or   $ w_{s(t+1)}) $ because $ x_j, y_t \in w_{r(t-1)} $ and $ x_j, y_t \not\in u_2 $. \

 Our discussion shows that  if $ v \neq u \in \Gamma_i $,  then $ u \not\in \cap_{w \in N(v) \cap \Gamma_{i-1}}( N(w) \cap \Gamma_i) $ and thus we have $ \cap_{w \in N(v) \cap \Gamma_{i-1}}( N(w) \cap \Gamma_i)= \{v\} $.

\end{proof}

Let $ \Gamma $ be a graph, then the line graph $ L( \Gamma )$
of the graph $ \Gamma$ is constructed by taking the edges of $ \Gamma$
as vertices of  $ L( \Gamma )$, and joining two vertices in $ L( \Gamma )$
whenever the corresponding edges in $ \Gamma $ have a common vertex. There is
an important relation between $ Aut( \Gamma)$ and  $ Aut( L(\Gamma))$.  Indeed, we have
the following result [3, chapter 15] which obtained by Whitney [17].  Although,  the proof of this result is  tedious but uses elementary facts in  graph theory and group theory, and it is not difficult (see [2, chapter 13] for a proof).   \

\begin{theorem}
The mapping $ \theta: Aut( \Gamma) \rightarrow Aut( L(\Gamma))$ defined by;
$$ \theta(g) \{ u,v\}= \{  g(u), g(v)\}, \   g \in Aut( \Gamma), \ \{u,v \} \in E( \Gamma ) $$
is a group homomorphism and in fact we have;  \

$(i)$   $ \theta $ is a monomorphism provided  $ \Gamma \neq K_2 $; \

$(ii)$    $ \theta $ is an
epimorphism provided $\Gamma $ is not $K_4 $, $K_4 $ with one edge deleted, or $K_4 $ with two
edges deleted.
\end{theorem}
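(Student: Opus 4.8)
The plan is to establish the three assertions in turn: that $\theta$ is a well-defined group homomorphism, that it is injective when $\Gamma\neq K_2$, and that it is surjective under the stated exclusions. For the first: if $g\in Aut(\Gamma)$ and $\{u,v\}\in E(\Gamma)$, then $\{g(u),g(v)\}\in E(\Gamma)$, so $\theta(g)$ is a self-map of $V(L(\Gamma))$; it is bijective because $g$ is; and it preserves adjacency in $L(\Gamma)$, since two edges $\{u,v\}$ and $\{v,w\}$ sharing the vertex $v$ are carried to $\{g(u),g(v)\}$ and $\{g(v),g(w)\}$, which share $g(v)$, and conversely via $g^{-1}$. Hence $\theta(g)\in Aut(L(\Gamma))$, and the computation $\theta(gh)\{u,v\}=\{(gh)(u),(gh)(v)\}=\theta(g)\bigl(\{h(u),h(v)\}\bigr)=\bigl(\theta(g)\theta(h)\bigr)\{u,v\}$ shows $\theta$ is a homomorphism.

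For part $(i)$, suppose $\theta(g)$ is the identity, i.e. $\{g(u),g(v)\}=\{u,v\}$ for every edge $\{u,v\}$ of $\Gamma$. If a vertex $v$ has two distinct neighbours $u,w$, then $g$ fixes the edges $\{u,v\}$ and $\{v,w\}$ setwise, so $g(v)\in\{u,v\}\cap\{v,w\}=\{v\}$, forcing $g(v)=v$. If $v$ has degree $1$ with neighbour $u$, then, since $\Gamma$ is connected and $\Gamma\neq K_2$, the vertex $u$ has degree at least $2$ and is already fixed; as $g$ fixes $\{u,v\}$ setwise and fixes $u$, it fixes $v$. (Isolated vertices, and the cases $\Gamma=K_1$ or $K_2$, are immediate.) Hence $g$ is the identity and $\theta$ is a monomorphism.

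Part $(ii)$ is the substantive point, and the plan is to recover the vertices of $\Gamma$ from the internal structure of $L(\Gamma)$. For each $v\in V(\Gamma)$ let $S_v\subseteq V(L(\Gamma))$ be the set of edges of $\Gamma$ incident with $v$; then each $S_v$ is a clique of $L(\Gamma)$ (a \emph{star}), every vertex of $L(\Gamma)$ lies in exactly two stars, and every edge of $L(\Gamma)$ lies in exactly one star. Apart from the stars, the only maximal cliques of $L(\Gamma)$ are the \emph{triangle cliques} $\{ab,bc,ca\}$ arising from triangles $\{a,b,c\}$ of $\Gamma$. The key step is to show that, under the stated hypothesis on $\Gamma$, each star is distinguishable among the maximal cliques of $L(\Gamma)$ by its incidence behaviour, so that any $\varphi\in Aut(L(\Gamma))$ permutes the family $\{S_v:v\in V(\Gamma)\}$. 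Granting this, $\varphi$ induces a permutation $g$ of $V(\Gamma)$ by $\varphi(S_v)=S_{g(v)}$ (the map $v\mapsto S_v$ is injective once $\Gamma$ is connected with more than two vertices, and isolated vertices are matched separately); since $\{u,v\}\in E(\Gamma)$ if and only if $S_u\cap S_v\neq\emptyset$, the map $g$ lies in $Aut(\Gamma)$; and $\theta(g)=\varphi$ because, for $e=\{u,v\}$, both $\theta(g)(e)$ and $\varphi(e)$ are the unique common vertex of $S_{g(u)}$ and $S_{g(v)}$.

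The main obstacle is exactly the recognizability of stars: a triangle clique has three vertices, and so does a star $S_v$ with $\deg(v)=3$, so cardinality alone does not separate the two types, and one must use finer local information about how a vertex of $L(\Gamma)$ can sit simultaneously in several maximal cliques. It is precisely for $\Gamma$ equal to $K_4$, $K_4$ with one edge deleted, or $K_4$ with two adjacent edges deleted that this recognition fails — there $L(\Gamma)$ admits an automorphism that mixes stars with triangle cliques and $|Aut(L(\Gamma))|>|Aut(\Gamma)|$, so $\theta$ is not onto — which is why these graphs are excluded. Carrying the recognition argument through in general is elementary but involves a fair amount of case analysis, so for the details we refer to [2, Chapter 13] and [3, Chapter 15].
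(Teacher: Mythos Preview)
The paper does not give its own proof of this theorem at all: it simply attributes the result to Whitney and directs the reader to [2, Chapter~13] and [3, Chapter~15] for a proof. Your sketch --- verifying that $\theta$ is a homomorphism, proving injectivity by intersecting edge-sets at a vertex of degree~$\ge 2$, and outlining surjectivity via the star/triangle-clique dichotomy in $L(\Gamma)$ --- is exactly the standard argument found in those references, and you ultimately cite the same sources for the case analysis. So your approach is not different from the paper's; it is simply more explicit, since the paper offers no argument to compare against.

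One small caution on part~$(i)$: your sentence ``since $\Gamma$ is connected and $\Gamma\neq K_2$'' smuggles in a connectedness hypothesis that the statement does not include. If $\Gamma$ is disconnected with a $K_2$ component (or with two isolated vertices), the kernel of $\theta$ is nontrivial even though $\Gamma\neq K_2$. The theorem as usually stated (and as the cited references state it) assumes $\Gamma$ is connected; you should either make that assumption explicit or note that the monomorphism claim is to be read componentwise.
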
\
For example, by using the above fact, we can obtain the following result.

\begin{proposition}
The automorphism group of Johnson graph $J(n,2)$ is isomorphic with the symmetric group $Sym(n)$. 

\end{proposition}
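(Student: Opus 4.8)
The plan is to recognize $J(n,2)$ as a line graph and then invoke Whitney's theorem (the theorem stated just above). First I would note that, straight from the definition, the vertices of $J(n,2)$ are the $2$-element subsets of $I=\{1,\dots,n\}$, i.e.\ precisely the edges of the complete graph $K_n$, and two such vertices are adjacent in $J(n,2)$ exactly when the corresponding edges of $K_n$ meet in a common endpoint. Hence $J(n,2)=L(K_n)$.

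Next I would apply Whitney's theorem (the theorem above) to $\Gamma=K_n$. For $n\ge 3$ we have $K_n\ne K_2$, so the homomorphism $\theta\colon Aut(K_n)\to Aut(L(K_n))$ is a monomorphism; and for $n\ge 5$ the graph $K_n$ has at least five vertices, hence is none of $K_4$, $K_4$ with one edge deleted, or $K_4$ with two edges deleted, so $\theta$ is also an epimorphism. Therefore $\theta$ is an isomorphism and $Aut(J(n,2))=Aut(L(K_n))\cong Aut(K_n)$. Since every permutation of $\{1,\dots,n\}$ preserves the edge set of $K_n$ (all pairs are edges), we have $Aut(K_n)=Sym(n)$, and the claim $Aut(J(n,2))\cong Sym(n)$ follows.

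There is no genuinely hard step: the whole argument reduces to the identification $J(n,2)=L(K_n)$ together with the theorem of Whitney. The one point that must be kept in mind is the value $n=4$, which is truly exceptional — there $L(K_4)$ is the octahedron $K_{2,2,2}$, whose automorphism group has order $48$ rather than $|Sym(4)|=24$, so the statement is to be read under the standing hypothesis $n\ge 5$ (this $n=2m$ situation is exactly the case singled out for separate treatment later in the paper).
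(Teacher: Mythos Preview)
Your proof is correct and follows exactly the same route as the paper's: identify $J(n,2)$ with $L(K_n)$, apply Whitney's theorem (Theorem~3.2) to obtain $Aut(L(K_n))\cong Aut(K_n)$, and use $Aut(K_n)=Sym(n)$. You are in fact more careful than the paper, since you explicitly verify the hypotheses of Theorem~3.2 and flag the genuine exception $n=4$ (where $J(4,2)\cong K_{2,2,2}$ has automorphism group of order $48$), a case the paper's Proposition~3.3 does not exclude in its statement but which is handled later under the $n=2m$ clause of Theorem~3.5.
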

\begin{proof}
We know that the Johnson graph $J(n, 2)$ is the line graph of the complete graph on $n$ vertices, namely, $ J(n,2) \cong L(K_n)$. Thus, we have $Aut(J(n,2))$  $\cong Aut(L(K_n))$. By Theorem 3.2. it followes that  $Aut(L(K_n)) \cong Aut(K_n)$. Now, since  $Aut(K_n) \cong Sym(n)$, then we have $Aut(J(n,2)) \cong Sym(n)$.

\end{proof}
We now try to prove that the above result is true not only for the case $m=2$ in $J(n,m)$ but also   for every possible $m$.  

\begin{proposition}

Let $ v$ be a vertex of the Johnson graph $ J(n,m) $. Then,  $ \Gamma_1=<N(v)> $, the
induced subgraph of $ N(v)$ in $ J(n,m) $, is isomorphic
 with $ L(K_{m,n-m})$, where  $ K_{m,n-m} $ is the
  complete bipartite graph with  partitions of orders $ m$ and $ n-m$.

\end{proposition}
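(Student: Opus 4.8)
The plan is to exhibit an explicit graph isomorphism between the induced subgraph $\langle N(v)\rangle$ and $L(K_{m,n-m})$ by matching up vertices in the obvious combinatorial way, and then to check that this bijection preserves adjacency in both directions. Write $v=\{x_1,\dots,x_m\}$ and let $Y=\{y_1,\dots,y_{n-m}\}$ be the complement of $v$ in $I=\{1,\dots,n\}$. A vertex $w\in N(v)$ is an $m$-set with $|w\cap v|=m-1$, so $w$ is obtained from $v$ by deleting exactly one $x_r$ and adjoining exactly one $y_s$; thus $w=(v\setminus\{x_r\})\cup\{y_s\}$ is determined by the ordered pair $(x_r,y_s)\in v\times Y$. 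This gives a bijection between $N(v)$ and the edge set of $K_{m,n-m}$ (with parts $v$ of size $m$ and $Y$ of size $n-m$), since the edges of a complete bipartite graph are precisely such pairs. Denote the vertex of $\langle N(v)\rangle$ corresponding to the edge $\{x_r,y_s\}$ by $w_{r,s}$.

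Next I would verify the adjacency correspondence. Two vertices $w_{r,s}$ and $w_{p,q}$ of $\langle N(v)\rangle$ are adjacent in $J(n,m)$ iff $|w_{r,s}\cap w_{p,q}|=m-1$. Since each of $w_{r,s},w_{p,q}$ shares $m-1$ elements of $v$ and one element of $Y$, a short case analysis on whether $r=p$ and whether $s=q$ shows: if $r=p$ and $s=q$ the sets coincide; if exactly one of the equalities $r=p$, $s=q$ holds then $|w_{r,s}\cap w_{p,q}|=m-1$; and if $r\neq p$ and $s\neq q$ then $|w_{r,s}\cap w_{p,q}|=m-2$ (we lose both an $x$ and a $y$). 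Hence $w_{r,s}\sim w_{p,q}$ in $\langle N(v)\rangle$ exactly when the edges $\{x_r,y_s\}$ and $\{x_p,y_q\}$ of $K_{m,n-m}$ share an endpoint, which is precisely the adjacency relation of $L(K_{m,n-m})$. Therefore the bijection $w_{r,s}\mapsto \{x_r,y_s\}$ is a graph isomorphism $\langle N(v)\rangle \cong L(K_{m,n-m})$.

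I do not anticipate a serious obstacle here; the argument is essentially a direct translation of the definitions. The one point that needs a little care is the case distinction in the intersection count — in particular confirming that when $r\neq p$ and $s\neq q$ one really drops to intersection size $m-2$ and hence non-adjacency, rather than accidentally landing back at $m-1$. This uses that $m\geq 3$ is not actually required for the isomorphism itself (the statement holds for all admissible $m$), though the hypotheses of the surrounding development guarantee the parts are large enough that the graph is non-degenerate. Once this lemma is in place, it feeds into the main line of argument by letting one analyze $\mathrm{Aut}(J(n,m))$ through the well-understood automorphism group of $L(K_{m,n-m})$ via Whitney's theorem (Theorem 3.2).
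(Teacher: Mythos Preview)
Your proof is correct and follows essentially the same route as the paper: both parametrize $N(v)$ by the pair (removed $x_r$, added $y_s$), observe that two such neighbours are adjacent in $J(n,m)$ iff they share an index, and identify this with the edge-incidence relation defining $L(K_{m,n-m})$. Your case analysis on the intersection size is slightly more detailed than the paper's (which simply asserts the adjacency criterion), but the argument is the same.
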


\begin{proof}
Let $ I = \{1,2,...,n \}, \  v= \{ x_1,...,x_m \}$
 and   $ w= v^c = \{ y_1,...,y_{n-m} \}$
be the complement of the subset $v$ in $ I$.
Let $ x_{ij}= v- \{x_j  \} \cup  \{ y_i \}$, $ 1\leq j  \leq m , \ 1 \leq i  \leq  n-m  $.
Then, \

 $$ N(v ) = \{ x_{ij} | 1 \leq j \leq m ,\ 1 \leq i \leq n-m \} $$
\

In $ \Gamma_1=<N(v)> $  two vertices $ x_{ij} $ $ x_{rs} $ are adjacent
if and only if $i=r $ or $ j= s $.  In fact,  $ \{  x_1, ..., x_{ j-1}, y_i, x_{ j+1},...,x_m \}$
and $ \{  x_1, ..., x_{ s-1}, y_r, x_{ s+1},...,x_m \}$ have $ m-1 $ element(s)
in common if and only if $x_i=x_r $ or $ y_j= y_s $.
\
Let $ X= \{ v_1,...,v_m \}  $ and $ Y= \{w_1,...,w_{n-m } \}  $ where $ X \cap Y = \varnothing $.
We know that
 the complete bipartite graph $ K_{ m, n-m} $
is the graph with vertex set $ X \cup  Y $,
 and edge set
 $ E= \{ \{ v_i, w_j \} , 1 \leq i\leq m ,\ 1 \leq j  \leq  n-m \} $.
 Then $  L ( K_{ n, n-m}) $ is the graph with vertex set
 $ V (L ( K_{ n, n-m}) )=E $ in which vertices $ \{ v_i,w_j \} $ and $ \{ v_r,w_s \} $
 are adjacent if and only if $v_i= v_r $ or $w_j=w_s$.
  Now it is an easy task to show that
 the mapping
\

\

\centerline{$ \phi : L ( K_{ n, n-m}) \rightarrow \Gamma_1=<N(v)> $, \
 $ \phi (v_i, w_j ) = x_{ ij} =  v- \{x_i  \} \cup  \{ y_j \} $}
\

  is a graph isomorphism.

\end{proof}

\begin{theorem}

Let $ \Gamma = J(n, m), \  n \geq 4, \  2 \leq m \leq  \frac {n}{2}  $. If $ n \neq 2m $,  then $ Aut (\Gamma )\cong Sym(n)$.
If $ n = 2m $,  then $ Aut(\Gamma) \cong Sym(n) \times\mathbb{Z}_2 $.

\end{theorem}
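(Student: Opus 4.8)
The plan is to squeeze $G := Aut(\Gamma)$ between an obvious lower bound and an upper bound read off from the local structure at a single vertex. For the lower bound, the natural action of $Sym(n) = Sym(I)$ on the $m$-subsets of $I$ gives an embedding $Sym(n) \hookrightarrow G$, which is faithful because $1 \leq m \leq n-1$; and when $n = 2m$ the complementation map $c : M \mapsto I \setminus M$ is an automorphism of order $2$ that commutes with the $Sym(n)$-action and is \emph{not} induced by any permutation (such a permutation would have to transpose every $a \in M$ with every $b \notin M$ simultaneously, which is impossible once $m \geq 2$), so in that case $Sym(n) \times \mathbb{Z}_2 \hookrightarrow G$, a subgroup of order $2\,(2m)!$. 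The case $m = 2$ I would dispose of separately: for $n \geq 5$ it is Proposition 3.3, while $J(4,2)$ is the octahedron $K_{2,2,2}$, whose automorphism group has order $48 = 2 \cdot 4!$ and therefore, in view of the lower bound just described, equals $Sym(4) \times \mathbb{Z}_2$. So from now on assume $m \geq 3$; then $n \geq 2m \geq 6$, so Proposition 3.1 is available.

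Fix $x \in V(\Gamma)$. Since automorphisms preserve distance, every element of $G_x$ stabilizes each sphere $\Gamma_i = \Gamma_i(x)$, and in particular acts on the induced subgraph $\langle N(x) \rangle$. The key step is the rigidity claim: \emph{if $g \in G_x$ fixes $N(x)$ pointwise, then $g = \mathrm{id}$}. I would prove, by induction on $i$, that $g$ fixes $\Gamma_i$ pointwise. This is trivial for $i \leq 1$. For $i \geq 2$, suppose $g$ fixes $\Gamma_{i-1}$ pointwise and let $v \in \Gamma_i$; then $g(v) \in \Gamma_i$, and since $g$ is an automorphism fixing $x$ and fixing $\Gamma_{i-1}$ pointwise we get $\Gamma_{i-1} \cap N(v) = g(\Gamma_{i-1} \cap N(v)) = \Gamma_{i-1} \cap N(g(v))$. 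Hence $v$ lies in $\cap_{w \in \Gamma_{i-1} \cap N(g(v))}(N(w) \cap \Gamma_i)$, and this intersection equals $\{g(v)\}$ by Proposition 3.1 applied at $x$ with the vertex $g(v)$; therefore $v = g(v)$. It follows that the restriction homomorphism $G_x \longrightarrow Aut(\langle N(x) \rangle)$ is injective.

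By Proposition 3.4 we have $\langle N(x) \rangle \cong L(K_{m,n-m})$, and since $K_{m,n-m}$ has $n \geq 6$ vertices it is neither $K_2$ nor any of the exceptional graphs of Theorem 3.2; hence $Aut(L(K_{m,n-m})) \cong Aut(K_{m,n-m})$, a group of order $m!\,(n-m)!$ when $n \neq 2m$ and of order $2\,(m!)^2$ when $n = 2m$. Combining this with the injectivity above and with vertex-transitivity of $J(n,m)$, we get $|G| = |G_x| \cdot \binom{n}{m} \leq |Aut(K_{m,n-m})| \cdot \binom{n}{m}$, which is $n!$ when $n \neq 2m$ and $2\,(2m)!$ when $n = 2m$. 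Since the subgroups exhibited in the first paragraph, $Sym(n) \leq G$ (resp. $Sym(2m) \times \mathbb{Z}_2 \leq G$), already have exactly these orders, we conclude $G = Sym(n)$ when $n \neq 2m$ and $G \cong Sym(2m) \times \mathbb{Z}_2$ when $n = 2m$.

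I expect the main obstacle to be the rigidity induction — more precisely, justifying that Proposition 3.1 may be invoked at the image vertex $g(v)$ rather than at $v$, which is exactly what the identity $\Gamma_{i-1} \cap N(v) = \Gamma_{i-1} \cap N(g(v))$ secures. The other ingredients — the faithfulness of the $Sym(n)$-action, the fact that $c \notin Sym(n)$ when $n = 2m$, the verification that $K_{m,n-m}$ misses Whitney's short exceptional list, and the arithmetic identity $m!\,(n-m)!\,\binom{n}{m} = n!$ — are all routine.
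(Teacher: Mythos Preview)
Your proposal is correct and follows essentially the same route as the paper: bound $|G_x|$ by embedding it into $Aut(\langle N(x)\rangle)\cong Aut(L(K_{m,n-m}))\cong Aut(K_{m,n-m})$ via Whitney's theorem, use Proposition~3.1 to show the restriction map has trivial kernel, and match the resulting upper bound $|G|\le |Aut(K_{m,n-m})|\binom{n}{m}$ against the obvious subgroup $Sym(n)$ (resp.\ $Sym(n)\times\langle c\rangle$). Your treatment of the boundary case $m=2$ is in fact tidier than the paper's: you invoke Proposition~3.3 for $n\ge 5$ and check $J(4,2)$ by hand, whereas the paper's main proof tacitly relies on Proposition~3.1 (stated only for $m\ge 3$) and on Whitney for $K_{m,n-m}$, which for $(n,m)=(4,2)$ lands on the exceptional graph $K_{2,2}$.
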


\begin{proof}Let $ G = Aut (\Gamma) $. Let $ x \in V = V( \Gamma )$, and $ G_x = \{f \in G | f(x) = x \}$ be the stabilizer subgroup of the vertex $ x $ in $ \Gamma $. Let $ <N(x)> = \Gamma_1 $ be the induced subgroup of
 $ N(x) $ in $ \Gamma $.  If $ f \in G_x $ then $ f _{|N(x)} $, the restriction of $ f $ to $ N(x) $ is an automorphism of $
 \Gamma_1 $. We define $ \varphi : G_x \longrightarrow Aut ( \Gamma_1) $ by the rule $ \varphi (f) = f_{ |N(x)} $. It is an easy task to show that $ \varphi $ is a group homomorphism. We show that $ ker (\varphi) $  is the identity group.
If $ f \in ker (\varphi) $, then $ f(x) = x $ and $ f(w) = w $ for  every $ w \in N(x) $. Let $ D $ be the diameter of $ \Gamma = J( n, m ) $ and $ \Gamma_i $ be the set of vertices of $ \Gamma $ at distance $ i $ from the vertex $ x$. Then $ V = V( \Gamma )= \cup_{i=0}^{D} \Gamma_i$.
We prove by induction on $ i $ that $ f(u) = u $ for every $ u \in \Gamma_i $. Let $ d(u, x ) $ be the distance of the vertex $ u $ from $ x $. If $ d(u, x) = 1 $,  then $ u \in \Gamma_1 $ and we have $ f(u) = u $. Assume $ f( u ) = u $ when $ d(u, x ) = i-1 $.
If $ d(u, x) = i $, then by  proposition 3.1.  $ \{ u \} = \cap_{w \in \Gamma_{i-1} \cap N(u)} N(w) $, and therefore $ f(u) =\cap_{w \in \Gamma_{i-1} \cap N(u)} N(f(w)) $.
Since, $ w \in \Gamma_{i-1} $, then   $ d(w, x) = i-1 $, and hence $f(w) = w $.
 Thus, \

\

\centerline{$ f(u) = \cap_{w \in \Gamma_{i-1} \cap N(u)} N(f(w)) = \cap_{w \in \Gamma_{i-1} \cap N(u)} N(w)=u $}

\

Thus,   $ ker (\varphi) $= $\{ 1 \} $.
On the order hand,
\

\centerline{$  \frac {G_v}{ker( \varphi)} \cong \varphi (G_v) \leq Aut (\Gamma_1) = Aut (<N(x)>)$
 and thus $ G_v \cong \varphi (G_v) \leq Aut (\Gamma_1 ) $}
\

 and hence  $ |G_v| \leq | Aut (\Gamma_1) | $. \

By Proposition 3.4.  $ \Gamma_1 \cong L( K_{m, n-m })$, thus $ Aut (\Gamma_1) \cong Aut (L( K_{m, n-m }))$. Since  by Theorem 3.2.  $Aut (L(K_{m, n-m}))$  $ \cong  $ $Aut(K_{m, n-m})$, hence  $ | Aut (\Gamma_1) |$
 = $ | Aut ( K_{m, n-m }) | $,  and therefore $ | G_v | \leq | Aut (K_{m, n-m}) |$.
  Note that if $ P = K_ { s,t}$  is a complete bipartite graph,  then  for $ t \neq s $  we have
   $ |Aut( P )|= s!t!$,  and for $ t=s $ we have $ |Aut( P )|= 2{(s!)}^2$ [3, chapter17].\

     Since $ \Gamma  = J(n,m)$ is a vertex transitive graph, we have $ |V(\Gamma)| = \frac { | G | }{ | G_v | }$, thus $ | G | = | G_v | | V (\Gamma) | \leq | Aut(K_{m, n-m})| {n \choose m } $.
Now, if $ n \neq 2m $, then  we have \

\

\centerline{$ | G | \leq ( m! )(n-m)! \frac {n!}{(m!) (n-m)!} = n! $}\

 and if  $ n=2m $, then we have\

\

\centerline{  (*) \ \ \ \  $ | G | \leq |Aut ( K_{m,m} )| {2m \choose m} $, and hence $ | G | \leq 2 (m!)^2 \frac {(2m)!}{(m!)(m!)} = 2(2m)! $} \

We know that
If $ \theta \in Sym (I) $ where $ I = \{1, 2, ..., n\} $,  then
\

\

\centerline{$ f_\theta : V( \Gamma )\longrightarrow V( \Gamma) $, $ f_\theta (\{x_1, ..., x_m \}) = \{ \theta (x_1), ..., \theta (x_m) \}$} \

 is an automorphism of $ \Gamma $ and the mapping  $ \psi : Sym (I) \longrightarrow Aut ( \Gamma )$, defined by the rule $ \psi ( \theta ) = f_\theta $ is an injection. \

Now if $ n \neq 2m$,
since  $ | G | = |Aut ( \Gamma ) | \leq n! $, we conclude that $ \psi $ is a bijection, and hence
 $ Aut ( \Gamma ) \cong Sym (I) $. \

If $ n = 2m $, then $ \Gamma = J (n, m) = J (2m, m) $ and the set  $ \{ f_\theta |\  \theta \in Sym (2m) \} = H $,  is a subgroup of $ Aut ( \Gamma ) $. It is an easy task to show that the  mapping  $ \alpha : V( \Gamma ) \longrightarrow V(\Gamma), \  \alpha( v ) = v^c $  where $ v^c $ is the complement of the set $ v $ in $I$, is
 also an automorphism of $ \Gamma$,  say,   $\alpha \in G = Aut (\Gamma ) $. \

We show that $ \alpha \not\in H $. If $ \alpha \in H $,  then there is a $ \theta \in Sym (I) $ such that $ f_\theta = \alpha $. Since $ o( \alpha ) = 2 $ $( o(\alpha )= $ order of $ \alpha ) $,  then $ o(f_\theta ) = o( \theta ) = 2 $. We assert that
$ \theta $ has no fixed points,  say,  $ \theta (x) \neq x $, for every  $ x \in I $. In fact,  if $ x \in I $, and $ \theta (x) = x $, then for the $ m $-set $ v = \{x, y_1, ..., y_{m-1} \} \subseteq I $, we have \

\

\centerline{$ f_{\theta }( v ) = \{ \theta (x), \theta (y_1), ..., \theta( y_{m-1}) \} = \{x, \theta(y_1), ..., \theta (y_{m-1}) \} $}
\

 hence  $ x \in f_\theta (v) \cap v $, and therefore  $ f_\theta (v) \neq v^c = \alpha (v) $  which is a contradiction.
Therefore,  $ \theta $ has a form such as   $ \theta = ( x_1, y_1 )...(x_m, y_m)$ where $ ( x_i, y_i)$ is a transposition of $Sym(I) $. Now, for the $m$-set $ v= \{x_1,y_1, x_2,...,x_{m-1}  \}$  we have
\

\

   \centerline{$ \alpha (v) = f_{ \theta }(v)= \{ \theta (x_1), \theta (y_1), \theta ( x_{m-1} )   \} $=
     $ \{ y_1, x_1,..., \theta (x_{m-1})   \} $} \

\

 and thus  $ x_1,y_1 \in f_ { \theta} (v) \cap v $,   hence $ f_\theta (v) \neq v^c = \alpha (v) $, which is a contradiction. \

We assert that  for every $\theta \in Sym(I)$, we have $f_{\theta}\alpha= \alpha f_{\theta}$. In fact, if $v=\{ x_1,...,x_m \}$ is a $m$-subset of $I$, then there are $y_j \in I,    1\leq j \leq m $,  such that $I= \{x_1,...,x_m, y_1,...,y_{m }   \}$. Now we have \

\

\centerline{ $f_{\theta}\alpha(v)=$
$  f_{\theta} \{ y_1,...,y_{m } \}= \{ \theta(y_1), ...,  \theta(y_{m })   \}$} \

 On the other hand, we have
\

\

\centerline{ $ \alpha f_{\theta}(v) =\alpha \{ \theta(x_1),..., \theta(x_m) \}= \{ \theta(y_1), ...,  \theta(y_{m})   \}$} \

because $I = \theta(I)= \{\theta(x_1),...,\theta(x_m), \theta(y_1),...,\theta(y_{m} )        \}$.  Consequently,   $f_{\theta}\alpha(v)=  \alpha f_{\theta}(v)  $.   We now deduce that $f_{\theta}\alpha=  \alpha f_{\theta}  $. \

Note that if $X$ is a group and $ Y,Z$ are subgroups of $X$, then  the subset $YZ=\{  yz \ |  \  y\in Y , z\in Z \}$ is a subgroup in X if and only if $YZ=ZY$.  According to this fact,    we conclude that  $ H  < \alpha > $ is a subgroup of $G$.

         Since $ \alpha \not\in H $ and $ o( \alpha ) =2$,  then $ H < \alpha > $ is a subgroup of $ G $ of order\

\

\centerline{ $ \frac { |H| |< \alpha > |}{ | H \cap < \alpha > |} = 2|H| = 2( ( 2m ) )! $ } \

Now,  since  by (*) $ | G | \leq 2( ( 2m )! ) $,  then $ G =  H  < \alpha > $. On the other hand,  since $f_{\theta}\alpha=  \alpha f_{\theta}  $, for every $ \theta \in Sym(I) $, then  $ H $ and
$< \alpha >$ are normal subgroups of $ G $. Thus,   $ G $    is a direct product of two  groups $H$ and $< \alpha >$, 
  namely, we have  $ G = H \times < \alpha > \cong Sym (2m)\times \mathbb{Z}_2 $.

\end{proof}

\

\end{document}